\documentclass[11pt]{amsart}

\usepackage[left=0.80in,right=0.80in,top=0.68in,bottom=0.62in]{geometry}
\usepackage{setspace}
\usepackage{microtype}
\usepackage{tikz}
\usepackage{float}
\usepackage{enumitem}

\allowdisplaybreaks

\usepackage{amsmath,amssymb,amsthm,mathtools}
\usepackage{aliascnt}

\usepackage{hyperref}
\hypersetup{colorlinks=true, linkcolor=blue, filecolor=magenta, urlcolor=cyan,  pdfpagemode=FullScreen,}

\usepackage[nameinlink,noabbrev, capitalize]{cleveref}

\newtheorem{theorem}{Theorem}[section]

\newaliascnt{proposition}{theorem}

\aliascntresetthe{proposition}

\newaliascnt{lemma}{theorem}
\newtheorem{lemma}[lemma]{Lemma}
\aliascntresetthe{lemma}

\newaliascnt{corollary}{theorem}

\aliascntresetthe{corollary}

\theoremstyle{definition}
\newaliascnt{definition}{theorem}

\aliascntresetthe{definition}

\crefname{theorem}{theorem}{theorems}
\Crefname{theorem}{Theorem}{Theorems}
\crefname{proposition}{proposition}{propositions}
\Crefname{proposition}{Proposition}{Propositions}
\crefname{lemma}{lemma}{lemmas}
\Crefname{lemma}{Lemma}{Lemmas}
\crefname{corollary}{corollary}{corollaries}
\Crefname{corollary}{Corollary}{Corollaries}
\crefname{definition}{definition}{definitions}
\Crefname{definition}{Definition}{Definitions}
\crefname{equation}{equation}{equations}
\Crefname{equation}{Equation}{Equations}
\crefname{section}{section}{sections}
\Crefname{section}{Section}{Sections}

\newlist{properties}{enumerate}{1}
\setlist[properties]{
    label=(P\arabic*),
    ref=P\arabic*
}

\crefname{propertiesi}{Property}{Properties}
\Crefname{propertiesi}{Property}{Properties}

\newcommand{\floor}[1]{\left\lfloor #1 \right\rfloor}
\newcommand{\ceil}[1]{\left\lceil #1 \right\rceil}

\usepackage{forest}
\usepackage{xparse}

\forestset{
  sequence tree/.style={
    for tree={
      inner sep=2pt,
      l sep=10mm,
      s sep=5mm,
      anchor=center,
      math content
    }
  }
}

\ExplSyntaxOn

\int_new:N \l__seqtree_none_int
\int_new:N \l__seqtree_ntwo_int
\int_new:N \l__seqtree_nthree_int

\int_new:N \l__seqtree_capone_int
\int_new:N \l__seqtree_captwo_int
\int_new:N \l__seqtree_capthree_int

\int_new:N \l__seqtree_L_int

\tl_new:N \l__seqtree_root_tl
\tl_new:N \l__seqtree_tree_tl

\prg_new_conditional:Npnn
  \__seqtree_can_extend:nnn #1#2#3
  { T, F, TF }
{
  \bool_if:nTF
  {
    \int_compare_p:n
      { #1 <= \l__seqtree_capone_int }
    &&
    \int_compare_p:n
      { #2 <= \l__seqtree_captwo_int }
    &&
    \int_compare_p:n
      { #3 <= \l__seqtree_capthree_int }
    &&
    (
      (
        \int_compare_p:n
          { \l__seqtree_L_int = 1 }
        &&
        \int_compare_p:n
          { #1 < \l__seqtree_none_int }
      )
      ||
      (
        \int_compare_p:n
          { \l__seqtree_L_int = 2 }
        &&
        \int_compare_p:n
          { #2 < \l__seqtree_ntwo_int }
      )
      ||
      (
        \int_compare_p:n
          { \l__seqtree_L_int = 3 }
        &&
        \int_compare_p:n
          { #3 < \l__seqtree_nthree_int }
      )
    )
  }
  { \prg_return_true: }
  { \prg_return_false: }
}

\cs_new_protected:Npn
  \__seqtree_build_node:nnnnn #1#2#3#4#5
{
  \tl_put_right:Nn \l__seqtree_tree_tl
  {
    [ { \mathtt{#1} }
  }

  \tl_if_blank:nF {#5}
  {
    \tl_put_right:Nn
      \l__seqtree_tree_tl
      { , #5 }
  }

  \__seqtree_can_extend:nnnT {#2}{#3}{#4}
  {

    \int_compare:nNnT
      {#2} < {\l__seqtree_capone_int}
    {
      \int_compare:nNnTF
        {\l__seqtree_L_int} = {1}
      {
        \__seqtree_build_node:nnnnn
          {#1 1}
          {#2+1}
          {#3}
          {#4}
          {edge={red, thick}}
      }
      {
        \__seqtree_build_node:nnnnn
          {#1 1}
          {#2+1}
          {#3}
          {#4}
          {edge={blue, thick}}
      }
    }

    \int_compare:nNnT
      {#3} < {\l__seqtree_captwo_int}
    {
      \int_compare:nNnTF
        {\l__seqtree_L_int} = {2}
      {
        \__seqtree_build_node:nnnnn
          {#1 2}
          {#2}
          {#3+1}
          {#4}
          {edge={red, thick}}
      }
      {
        \__seqtree_build_node:nnnnn
          {#1 2}
          {#2}
          {#3+1}
          {#4}
          {edge={blue,thick}}
      }
    }

    \int_compare:nNnT
      {#4} < {\l__seqtree_capthree_int}
    {
      \int_compare:nNnTF
        {\l__seqtree_L_int} = {3}
      {
        \__seqtree_build_node:nnnnn
          {#1 3}
          {#2}
          {#3}
          {#4+1}
          {edge={red, thick}}
      }
      {
        \__seqtree_build_node:nnnnn
          {#1 3}
          {#2}
          {#3}
          {#4+1}
          {edge={blue, thick}}
      }
    }
  }

  \tl_put_right:Nn
    \l__seqtree_tree_tl
    { ] }
}

\cs_generate_variant:Nn
  \__seqtree_build_node:nnnnn
  { Vnnnn }

\NewDocumentCommand{\SequenceTree}{mmm}
{
  \group_begin:

  \int_set:Nn
    \l__seqtree_L_int
    {#3}

  \int_set:Nn \l__seqtree_none_int
  {
    \int_div_truncate:nn {#1+2}{3}
  }

  \int_set:Nn \l__seqtree_ntwo_int
  {
    \int_div_truncate:nn {#1+1}{3}
  }

  \int_set:Nn \l__seqtree_nthree_int
  {
    \int_div_truncate:nn {#1}{3}
  }

  \int_set:Nn \l__seqtree_capone_int
  {
    \l__seqtree_none_int - 1
  }

  \int_set:Nn \l__seqtree_captwo_int
  {
    \l__seqtree_ntwo_int - 1
  }

  \int_set:Nn \l__seqtree_capthree_int
  {
    \l__seqtree_nthree_int - 1
  }

  \int_case:nnF {#3}
  {
    {1}
    {
      \int_set_eq:NN
        \l__seqtree_capone_int
        \l__seqtree_none_int
    }

    {2}
    {
      \int_set_eq:NN
        \l__seqtree_captwo_int
        \l__seqtree_ntwo_int
    }

    {3}
    {
      \int_set_eq:NN
        \l__seqtree_capthree_int
        \l__seqtree_nthree_int
    }
  }
  {
    \PackageError
      {SequenceTree}
      {L must be 1, 2, or 3}
      {}
  }

  \tl_set:Nx \l__seqtree_root_tl
  {
    \prg_replicate:nn {#2}{1}
  }

  \tl_clear:N \l__seqtree_tree_tl

  \__seqtree_build_node:Vnnnn
    \l__seqtree_root_tl
    {#2}
    {0}
    {0}
    {sequence~tree}

  \exp_args:NV
    \Forest
    \l__seqtree_tree_tl

  \group_end:
}

\ExplSyntaxOff

\newcommand{\R}{\mathbb{R}}

\title{A base-$8$ upper bound for planar peeling sequences}
\author{Andr\'e Hisatsuga}
\author{Griffin Johnston}
\author{Rafael Miyazaki}
\address{Department of Mathematics, Emory University, Atlanta, GA 30322, USA}
\email{\{andre.hisatsuga|john.johnston|rafael.kazuhiro.miyazaki\}@emory.edu}
\date{}

\begin{document}

\begin{abstract}
Let $g(n)$ denote the minimum number of peeling sequences among all $n$-point sets in general position in the plane. Dumitrescu and T\'oth proved an exponential upper bound with base $12.29$, and Simon subsequently lowered the base to $9.78$. Using the same recursive construction, we prove
\begin{equation*}
   g(n) \le (8+o(1))^n.
\end{equation*}

\end{abstract}

\maketitle

\section{Introduction}\label{sec:introduction}
Let $P\subset\R^2$ be a finite set in general position. A \emph{peeling sequence} of $P$ is an ordering of its points obtained by repeatedly deleting a vertex of the convex hull of the points that remain. We write $g(P)$ for the number of peeling sequences of $P$.
Observe that if $P$ is in convex position 
then $g(P) = n!$. Thus we are interested in 
minimizing the number of peeling sequences 
over all $n$-point sets $P$ in the plane and write 
\begin{equation*}
   g(n)=\min\{g(P): P\subset\R^2,\ |P|=n,\ P\text{ in general position}\}.
\end{equation*}
\par 
Peeling sequences were first introduced by 
Dumitrescu in \cite{dumitrescuOriginal2022peeling} where the 
bounds 
\begin{equation*}
    \Omega(3^n) \leq g(n) \leq 2^{n\log\log n}, 
\end{equation*}
were obtained. The lower bound follows 
from the simple observation that, while at least three points remain, there are at least 
three points on the convex hull of 
the remaining point set. 
The upper bound was subsequently improved 
by Dumitrescu and T\'oth in 
\cite{dumitrescuToth2025peeling} to 
\begin{equation*}
    g(n) \leq O(12.29^n), 
\end{equation*}
via a recursive construction, which we will 
describe in depth in \Cref{sec:construction} below. 

We remark that this construction is reminiscent of other recursive 
constructions in discrete geometry, including 
Edelsbrunner and Welzl's construction of point sets with $\Omega(n\log n)$ 
halving lines \cite{edelsbrunner1985number}, and the lower bound constructions for the Ramsey number 
of $n$-segments in the plane
\cite{LarMatPachToro, KaroPachToth97, kyncl2012ramsey}.  
\par 
By a more careful 
analysis of the construction 
of Dumitrescu and T\'oth, Simon \cite{simon2026further} obtained the bound 
\begin{equation*}
    g(n) \leq O(9.78^n).
\end{equation*}
Our main result is an improvement of 
the base of the exponent in the upper bound 
of $g(n)$ to $8+o(1)$. 
\par
There are two main novelties in our proof. 
Firstly, we  
obtain bounds for 
the peeling number for one-sided 
truncations of 
the recursive construction of 
Dumitrescu and T\'oth. These slightly more general objects allow our inductive argument to work.  
Secondly, 
the analysis of the peeling number of these 
truncations is simplified 
by applying a weighted version of 
the Kraft-McMillan inequality 
\cite{kraft1949device,mcmillan1956two} 
for prefix-free codes. 

\begin{theorem}\label{thm:main}
We have, 
\begin{equation*}
   g(n)\le(8+o(1))^n.
\end{equation*}
\end{theorem}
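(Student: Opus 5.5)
The plan is to work throughout with the recursive construction of Dumitrescu and T\'oth described in \Cref{sec:construction}, and to prove a bound of the form $g(P)\le \mathrm{poly}(n)\,8^{n}$ for a larger family of sets than the construction itself. At level $k$ the construction $P_k$ consists of three flattened, suitably rotated copies of $P_{k-1}$ placed near the vertices of a triangle. The larger family is the \emph{one-sided truncations} of $P_k$: the sets obtained from $P_k$ by deleting an initial or terminal segment of points along one side, in the order in which the flattening arranges them. Including these makes the induction close. The reason is that once some points of a cluster have been peeled, what remains of that cluster is exactly such a truncation of a smaller copy, not a full copy. Working with truncations also gives a bound for every $n$ directly, rather than only for $n=3^k$, so no separate interpolation between powers of $3$ is needed.

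I would organise the induction as follows. Define $T(m)$ as the maximum of $g(Q)$ over all truncations $Q$ of all levels with $|Q|=m$. Every peeling sequence of a truncation $Q$ of $P_k$ decomposes into two pieces of data:
\begin{itemize}
\item an interleaving word $w\in\{1,2,3\}^{|Q|}$, recording which cluster each deleted point comes from;
\item for each cluster, a peeling sequence of that cluster.
\end{itemize}
The geometric input is the restriction that the flattening imposes. While a cluster is ``shielded'' by the other two, only its extreme points along one direction can lie on the global hull. So the words $w$ that can actually occur are heavily constrained, roughly as in the figure trees drawn by the \texttt{SequenceTree} macro: one cluster is designated as the leading one, and the others can only advance under capacity constraints. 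Moreover, when a cluster is only partially exposed, the number of ways to continue its peeling is bounded by the number of peeling sequences of a one-sided truncation. This yields a recursive inequality of the form
\begin{equation*}
 g(Q)\;\le\;\sum_{w\ \mathrm{admissible}}\ \prod_{\text{blocks of } w} T(\text{block sizes}),
\end{equation*}
whose precise shape I would extract from the construction.

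To close the induction with base $8$, I would assume $T(m)\le C m^{a}8^{m}$ for smaller $m$. It then suffices to show that the admissible words, viewed as codewords of a prefix-free code over $\{1,2,3\}$, satisfy a weighted Kraft--McMillan inequality
\begin{equation*}
 \sum_{w}\prod_i \lambda_{w_i}\le 1,
\end{equation*}
with letter weights $\lambda_i$ chosen so that the exponential factors $8^{m_i}$ combine into exactly $8^{|Q|}$. The polynomial losses coming from the block decomposition are absorbed into $m^{a}$, and after summing over the $O(\log n)$ levels they contribute only a $(1+o(1))^n$ factor. The proof of the weighted Kraft inequality itself is the standard tree argument: assign each codeword the mass of its subtree and note that the subtrees are disjoint.

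The main obstacle will be the middle step: pinning down exactly which interleavings are admissible for a truncated set, and verifying that they really form a prefix-free family whose weighted Kraft sum is at most $1$ with the weights that produce base $8$ rather than something larger. Here I would first try weights that depend only on which cluster is currently leading. I would check the inequality by induction on the code tree, along the branching rule encoded by the caps $n_j-1$ versus $n_j$ in the \texttt{SequenceTree} figures.
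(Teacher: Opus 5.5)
\textbf{The gap is in the form of your induction hypothesis, and no choice of Kraft weights can repair it.} A bound $T(m)\le \mathrm{poly}(m)\,8^{m}$ that depends only on $m=|Q|$ does not propagate, because truncation destroys the defence that property (P1) provides.

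\emph{Two-block truncations.} If $a\ge n_1$, then $S_n^-[a]$ is a truncation of $B_2$ together with all of $B_3$. With only two active blocks, nothing like (P1) constrains the interleaving. The only estimate your framework provides is \Cref{lem:partition-bound}, which gives $g(Q)\le\binom{m}{m_2}T(m_2)T(m_3)$ with $m_2+m_3=m$. For $m_2\approx m_3$ this is of order $2^{m}8^{m}=16^{m}$, not $8^{m}$.

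\emph{The three-block case.} The same failure occurs inside your main case. After the stopping time, the two surviving blocks again interleave freely, so a leaf $\ell$ contributes $2^{n-|\ell|}8^{n-|\ell|}=16^{n-|\ell|}$ to the upper bound. Already for the truncation $S_n^-[n_1-1]$ with leading block $1$, the single leaf $\ell=1^{n_1}$ contributes $16^{n-n_1}$. This far exceeds the target $8^{n-n_1+1}$.

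Conversely, if a size-only base-$8$ bound for truncations were available, your tree sum for the untruncated $S_n$ would already give base about $6.64$, more than the theorem. This suggests the hypothesis is too strong to be proved this way. For the same reason, your remark that truncations handle every $n$ directly does not survive the correct normalisation. The paper instead builds $S_n$ for every $n$ with $n_i\in\{\floor{n/3},\ceil{n/3}\}$.

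\textbf{The missing idea} is to normalise truncations by their parent size and credit each truncated point with a factor $2$ only. The paper proves $h(n,a)\le M(n)8^{n}2^{-a}$, which is $8^{m}4^{a}$ for a truncation with $m=n-a$ points. This exchange rate is exactly matched by weight $1/2$ on red (leading-block) edges and $1/4$ on blue edges. Since $1/\rho(\ell)=2^{R+B}2^{B}=2^{|\ell|-a}2^{B}$:
\begin{itemize}
\item the factor $2^{|\ell|-a}$ combines with the interleaving factor $2^{n-|\ell|}$ into $2^{n-a}$;
\item the surplus $2^{B}\le 2^{x(\ell)}$ is cancelled by $\prod_{i\ne L}2^{-x_i(\ell)}$;
\item and $2^{n-a}8^{n-n_L}\le 4\cdot 8^{n}2^{-a}$ because $n\le 3n_L+2$.
\end{itemize}
The resulting bound is uniform in $\ell$. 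The two-block case $a\ge n_1$ also closes, since $2^{n-a}8^{n-n_1}2^{-(a-n_1)}\le 4\cdot 8^{n}2^{-a}$.

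\textbf{A secondary point: the error factor cannot be polynomial.} The two surviving blocks multiply their error factors. The three choices of $L$ and the rounding loss $2^{n-3n_L}\le 4$ cost a further factor $12$. So the recursion needs roughly $12\,M(\ceil{n/3})^{2}\le M(n)$. This fails for $M(n)=Cn^{c}$ and forces $M(n)=12^{\Theta(n^{\log_3 2})}$, which is still $e^{o(n)}$.
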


\subsection{General tools used in the proof}
We use the following weighted version of Kraft's inequality. The proof is a modification of 
that found in \cite{cover1991elements}, for example. We include the short proof for completeness.
For a tree $T$ rooted at a vertex $r$ we write 
$v \to w$ if $w$ is a child of $v$. 
\begin{lemma}\label{lem:weighted-kraft}
Let $T$ be a finite rooted tree. 
For every edge $v\to w$ of $T$, assign a
nonnegative weight $p(v,w)$, and suppose that for every vertex $v$,
\begin{equation*}
    \sum_{w:\, v\to w} p(v,w)\le 1.
\end{equation*}

For a vertex $u\in V(T)$, let its mass be the product of the weights
of the edges along the unique path from the root to $u$, with the root
having mass $1$. Then the set of leaves of $T$ has total mass at most $1$.
\end{lemma}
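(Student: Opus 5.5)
We will prove a slightly stronger statement by strong induction on the height of the subtree: for every vertex $u$ of $T$, the total mass of the leaves in the subtree $T_u$ rooted at $u$ is at most the mass of $u$. Applying this to the root, whose mass is $1$, gives \Cref{lem:weighted-kraft}. To make the induction clean, write $m(u)$ for the mass of $u$. Note that masses are multiplicative along edges: if $v\to w$ then $m(w)=m(v)\,p(v,w)$, since the root-to-$w$ path is the root-to-$v$ path followed by the edge $v\to w$.

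For the base case, let $u$ be a leaf. The only leaf in $T_u$ is $u$ itself, so the leaf mass of $T_u$ equals $m(u)$ and the claim holds with equality. For the inductive step, let $u$ be an internal vertex with children $w_1,\dots,w_k$. The leaves of $T_u$ are partitioned into the leaves of $T_{w_1},\dots,T_{w_k}$, since $u$ itself is not a leaf. The induction hypothesis applies to each $w_i$, because $T_{w_i}$ has smaller height, and gives leaf mass of $T_{w_i}$ at most $m(w_i)=m(u)\,p(u,w_i)$. Summing over $i$ and using the hypothesis $\sum_i p(u,w_i)\le 1$ together with $m(u)\ge 0$ (all weights are nonnegative), the leaf mass of $T_u$ is at most
\begin{equation*}
    \sum_{i=1}^k m(u)\,p(u,w_i)=m(u)\sum_{i=1}^k p(u,w_i)\le m(u).
\end{equation*}

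An equivalent, more Kraft-like route would be a probabilistic one. Pad each vertex with a "stop" outcome of probability $1-\sum_w p(v,w)$, and run a random walk from the root. Each leaf's mass is then the probability that the walk reaches that leaf. These events are disjoint, so their probabilities sum to at most $1$.

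We prefer the inductive argument because it avoids constructing a probability space. There is no real obstacle here. The only point needing care is that nonnegativity of the weights is used in multiplying the inequality by $m(u)$, and finiteness of $T$ is what makes the induction on height well founded.
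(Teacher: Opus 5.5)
Your proof is correct and rests on the same one-step inequality as the paper's: the children of any vertex $v$ have total mass $m(v)\sum_{w} p(v,w)\le m(v)$. The paper applies this top-down, repeatedly replacing a non-leaf vertex by its children until only leaves remain, while you organize the same iteration as a bottom-up induction on subtree height, which makes the termination step slightly more explicit.
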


\begin{proof}
Let $\mu(u)$ denote the mass of a vertex $u$. Thus $\mu(r)=1$ for the
root $r$, and if $w$ is a child of $v$, then
$\mu(w)=\mu(v)p(v,w)$.
Consequently, for every vertex $v$,
\begin{equation*}
      \sum_{w:\,v\to w}\mu(w)
    =
    \mu(v)\sum_{w:\,v\to w}p(v,w)
    \le \mu(v).
\end{equation*}

Starting at the root and successively
replacing each non-leaf vertex $v$ by its children in
$T$ cannot increase the total mass, by the inequality
above. After finitely many such replacements, the resulting set of
vertices is precisely the set of leaves of $T$. Hence
\begin{equation*}
    \sum_{u \text{ leaf of }T}\mu(u)\le \mu(r)=1. \qedhere
\end{equation*}
    
\end{proof}
We observe that the classical $q$-ary Kraft inequality is recovered by assigning weight
$1/q$ to every edge of a $q$-ary prefix tree: a codeword $w$ of length
$\ell_w$ then has mass $q^{-\ell_w}$, and hence
\begin{equation*}
     \sum_w q^{-\ell_w}\le 1.
\end{equation*}
   
We will also use the following 
lemma, which is essentially 
\cite[Lemma 2]{dumitrescuToth2025peeling} 
and appears in the below form in \cite[Lemma 3.4]{simon2026further}. 
\begin{lemma}[Dumitrescu-T\'oth]\label{lem:partition-bound}
Let $P\subset\mathbb R^d$ be a finite point set in general position,
and let
\begin{equation*}
    P=P_1\sqcup\cdots\sqcup P_s.
\end{equation*}
Then
\begin{equation*}
        g(P)
    \le
    \binom{|P|}{|P_1|,\ldots,|P_s|}
    \prod_{i=1}^s g(P_i)
    \le
    s^{|P|}\prod_{i=1}^s g(P_i).
\end{equation*}
\end{lemma}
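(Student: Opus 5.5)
The plan is to argue directly about peeling sequences, with no reference to any construction. A peeling sequence of $P$ is a permutation of $P$. When we restrict it to the points of a single part $P_i$, keeping their relative order, we get a sequence $\sigma_i$, and the first claim is that $\sigma_i$ is a peeling sequence of $P_i$.

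To prove the claim, suppose a point $x\in P_i$ is deleted at some step. At that moment $x$ is a vertex of the convex hull of the remaining set $R\subseteq P$. Hence there is a line through $x$ with all other points of $R$ strictly on one side; general position is what makes this strict. The points of $P_i$ not yet deleted at this moment form the set $R\cap P_i$, which is exactly the set of points still remaining in the restricted sequence $\sigma_i$. Since $R\cap P_i\subseteq R$, the same line shows that $x$ is a vertex of $\operatorname{conv}(R\cap P_i)$. So each step of $\sigma_i$ deletes a hull vertex of what remains of $P_i$, and $\sigma_i$ is a peeling sequence of $P_i$. The argument works verbatim in $\mathbb R^d$, using a supporting hyperplane in place of a line.

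Next, the map sending a peeling sequence of $P$ to the data
\[
\bigl(\tau,\sigma_1,\dots,\sigma_s\bigr)
\]
is injective. Here $\tau\in\{1,\dots,s\}^{|P|}$ records, at each step, the index of the part containing the deleted point. The original ordering is recovered by reading $\tau$ left to right and taking the next unused element of $\sigma_{\tau_k}$ at step $k$. The word $\tau$ contains exactly $|P_i|$ copies of the letter $i$, so the number of possible $\tau$ is the multinomial coefficient $\binom{|P|}{|P_1|,\ldots,|P_s|}$. Each $\sigma_i$ ranges over at most $g(P_i)$ possibilities. Injectivity therefore gives the first inequality.

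The second inequality follows from the multinomial theorem:
\[
\binom{|P|}{|P_1|,\ldots,|P_s|}\le \sum_{a_1+\cdots+a_s=|P|}\binom{|P|}{a_1,\ldots,a_s}=s^{|P|}.
\]

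Nothing here is a real obstacle. The only point needing care is the hereditary step, namely that a hull vertex of $R$ stays a hull vertex of every subset containing it, which is immediate from the separating-line characterization in general position.
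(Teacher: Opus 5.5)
Your proof is correct and is the standard restriction-and-interleaving injection behind this lemma; the paper does not prove it itself but cites it from Dumitrescu--T\'oth and Simon. One small correction: general position is not what makes the separation strict. The point $x$ is a vertex of $\operatorname{conv}(R)$ exactly when $x\notin\operatorname{conv}(R\setminus\{x\})$, and this condition passes to every subset of $R$ containing $x$ with no further hypothesis.
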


\section{The flat recursive construction}\label{sec:construction}

\begin{figure}[h]
\centering
\begin{tikzpicture}[scale=.74,>=stealth]
  \coordinate (O) at (0,0);
  \draw[->] (O) -- (2.55,0);
  \draw[->] (O) -- (-1.35,-2.34);
  \draw[->] (O) -- (-1.35,2.34);

  \draw[line width=2.2pt] (1.10,0.07) -- (1.85,-0.05);
  \draw[line width=2.2pt] (-.54,-1.12) -- (-.91,-1.72);
  \draw[line width=2.2pt] (-.82,1.55) -- (-1.16,2.10);

  \node[above right] at (1.82,-0.01) {$3$};
  \node[left] at (-0.98,-1.76) {$2$};
  \node[left] at (-1.22,2.03) {$1$};

  \node[below] at (0,-2.62) {before flattening};

  \begin{scope}[xshift=6.6cm]
    \draw[->] (-2.65,0) -- (2.75,0) node[right] {$x$};

    \draw[thin,black!45] (-2.35,.36) -- (0,0) -- (2.45,.10);
    \draw[thin,black!45] (-2.35,-.36) -- (0,0);

    \draw[line width=2.2pt] (-2.20,.29) -- (-1.48,.20);
    \draw[line width=2.2pt] (-.82,-.12) -- (-.24,-.045);
    \draw[line width=2.2pt] (1.05,.045) -- (1.92,.015);

    \draw[dashed,black!45] (-1.15,-.62) -- (-1.15,.62);
    \draw[dashed,black!45] (.38,-.62) -- (.38,.62);

    \node[above] at (-1.83,.33) {$1$};
    \node[below] at (-.53,-.20) {$2$};
    \node[above] at (1.49,.10) {$3$};

    \node[below] at (0,-2.62) {after flattening};
  \end{scope}
\end{tikzpicture}

\caption{One recursive step before and after flattening. The top-level blocks are labeled 1, 2, 3 according to their left-to-right order after flattening. Their $x$-projections are
pairwise disjoint and occur in this order.}
\label{fig:construction}
\end{figure}

Our recursive construction of the sets 
$S_n$ is the one introduced by Dumitrescu and T\'oth in~\cite{dumitrescuToth2025peeling}. 
For completeness, we describe the construction 
below. 
$S_1$ will be a point, $S_2$ will be two points, 
and $S_3$ will be a flat obtuse triangle. 
For $n\geq 4$ we write 
$n = n_1 + n_2 + n_3$ where $n_1,n_2,n_3\in \{\floor{n/3}, \ceil{n/3}\}$. 
Let $r_1$, $r_2$, $r_3$ be three rays with arguments $ \frac{2\pi}{3}$, $\frac{4\pi}{3}$, $0$, respectively.
Place a copy of $S_{n_i}$ along the ray 
$r_i$ so 
that the line through any two points of 
$S_{n_i}$ has slope arbitrarily close to that of $r_i$. 
Call this copy of  
$S_{n_i}$ on the ray $r_i$ the \emph{$i$-th block} 
and denote it by 
$B_i$.
Translate the blocks along their corresponding rays and choose the copies sufficiently thin, as in~\cite{dumitrescuToth2025peeling,simon2026further}. Lastly, we flatten $S_n$ 
via an invertible 
affine transformation, choosing the parameters so that the $x$-projections of the three blocks 
are pairwise disjoint and occur in the order $B_1,B_2,B_3$ from left to right. That is, every point of $B_i$ lies to the left of every point of $B_{i+1}$. See \cref{fig:construction}.
\par 
An important feature of this construction 
is that the order in which the individual 
blocks 
are peeled is controlled in the initial 
stages of the peeling process. We 
make this notion precise below. 
\par 
An $a$-truncation of $S_n$ is the point set obtained from $S_n$ by either deleting its $a$ leftmost points or its $a$ rightmost points. We write these as $S_n^-[a]$ and $S_n^+[a]$ when orientation matters and define \begin{equation*}
    h(n,a)\coloneqq \max \{g(S_n^-[a]), g(S_n^+[a])\}.
\end{equation*}

For each block $B_i$ defined above, label the 
points in $B_i$ as 
$v_1^{(i)},\dots, v_{n_i}^{(i)}$ where the subscripts 
are ordered by decreasing distance from the origin 
along the ray $r_i$. So, for example, $v_1^{(i)}$ is 
furthest from the origin in the direction of 
$r_i$ and $v_{n_i}^{(i)}$ is closest to 
the origin. Deleting the first $a$ points of $B_i$ in this outer-to-inner order leaves an affine copy of one of the one-sided $a$-truncations of $S_{n_i}$ for $0\le a \le n_i$. Consequently its peeling number is at most $h(n_i,a)$.
\par 
At any step in some peeling process $\pi$ we say that a 
ray $r_i$ is \emph{active} if some points 
from the block $B_i$ have not yet been peeled. 
Furthermore, we call the first time at which some ray $r_{L(\pi)}$ ceases to be active the \emph{stopping time} of $\pi$.  We call $r_L = r_{L(\pi)}$ the \emph{leading ray} and the other two rays the \emph{surviving rays}.

As in the construction of Dumitrescu and T\'oth
\cite{dumitrescuToth2025peeling}, 
the parameters in the 
flattening and thinning steps can 
be chosen so that, 
while all three top-level
blocks are nonempty, the convex hull contains exactly one point
from each block, namely its current outermost point. Consequently, 
\begin{properties}
     \item\label{property1} Before stopping time only the outermost point 
    of each active block may be peeled.
\end{properties}

    So if, for example,
    $r_3$ is the leading ray, then at stopping time the surviving portions of $B_1$ and $B_2$ are  respectively one-sided $a$- and $b$-truncations of their recursive copies, for some $0\leq a < n_1$ and $0 \le b < n_2$. See \Cref{fig:surviving-structure}.

\begin{figure}[h]
\centering
\begin{tikzpicture}[scale=1.05,>=stealth]

\newcommand{\remainpt}[2]{%
  \fill (#1,#2) circle (1.8pt);
}
\newcommand{\deletedpt}[2]{%
  \draw[red, line width=.95pt]
    (#1-.06,#2-.06) -- (#1+.06,#2+.06)
    (#1-.06,#2+.06) -- (#1+.06,#2-.06);
}

\draw[->] (-3.0,0) -- (3.0,0) node[right] {$x$};
\draw[thin,black!40] (-2.55,.40) -- (0,0) -- (2.60,.11);
\draw[thin,black!40] (-2.55,-.40) -- (0,0);

\deletedpt{-2.20}{.345}
\deletedpt{-2.04}{.320}
\deletedpt{-1.88}{.295}
\remainpt{-1.72}{.270}
\remainpt{-1.56}{.245}
\remainpt{-1.40}{.220}
\node[above] at (-1.83,.43) {$1$};

\deletedpt{-.92}{-.145}
\deletedpt{-.78}{-.124}
\remainpt{-.64}{-.103}
\remainpt{-.50}{-.082}
\remainpt{-.36}{-.061}
\node[below] at (-.63,-.24) {$2$};

\deletedpt{1.10}{.047}
\deletedpt{1.28}{.040}
\deletedpt{1.46}{.033}
\deletedpt{1.64}{.026}
\deletedpt{1.82}{.019}
\node[above] at (1.47,.13) {$3$};

\draw[dashed,black!40] (-1.17,-.58) -- (-1.17,.58);
\draw[dashed,black!40] (.38,-.58) -- (.38,.58);

\end{tikzpicture}

\caption{
An example of a surviving structure at stopping time with leading ray $r_3$. Block $3$ has been exhausted, while blocks
$1$ and $2$ survive as one-sided truncations of their recursive copies.} 
\label{fig:surviving-structure}
\end{figure}

The separated $x$-projections in \Cref{fig:construction} also immediately imply
\begin{properties}
\setcounter{propertiesi}{1}
    \item \label{property2} Every one-sided truncation successively exhausts top-level blocks from the corresponding end. Thus the blocks already passed by the truncation are empty, at most one block is partially truncated, and all remaining blocks are untouched.
\end{properties}

\section{Combinatorial count}\label{sec:counting}
Given a peeling sequence $\pi$ of $S_n$, we 
may consider the word $\tilde{\pi}$ over the alphabet $[3]$ obtained 
from $\pi$ by replacing each point belonging to 
$B_i$ with the letter $i$. Following \cite{dumitrescuToth2025peeling} we call $\tilde{\pi}$ 
the \emph{simplified peeling sequence} of 
$\pi$. For each $i\in [3]$, let $x_i$ be the $i$-frequency function; that is, for each word $w$ over the alphabet $[3]$, let $x_i(w)$ be the number of times the letter $i$ occurs in $w$. Also, let $|w|$ denote the length of $w$.

A central part of our argument is to  
consider the prefixes of all 
possible simplified peeling sequences 
one may obtain given a fixed leading ray $L$ 
up to the point where $L$ ceases to be active. One can organize this information in a tree. 
\par 
Given a positive integer $n$, an index $L\in [3]$ and a non-negative integer $a<n_1$, the \emph{defended-prefix tree $T(n,a,L)$}, rooted at the string consisting 
of $a$ $1$'s,  
records all possible continuations 
of this initial simplified peeling sequence 
given that $L$ is the leading ray and $S_n$ has 
been $a$-truncated from the left, up to the 
point where $L$ is the first ray to become inactive. 
In particular, each vertex of $T$ 
corresponds to a 
word $w$ over the alphabet $\{1,2,3\}$. 
This word is a prefix of a 
simplified peeling sequence 
whose first $a$ letters are $1$'s and 
such that no block is exhausted before $B_{L}$ is exhausted. If $w$ has $n_{L}$ $L$'s then 
it is a leaf. Otherwise, 
the children of $w$ consist of all $1$-letter increments of $w$ consistent with the information that $L$ is the leading ray. Each leaf corresponds to the prefix of a simplified 
peeling sequence on $S_n$ in which $B_1$ is $a$-truncated in the initial steps, and $L$ is the first ray to become 
inactive. 
Furthermore, color an edge of $T(n,a,L)$ 
red if it corresponds to the deletion of a point from 
$L$. Otherwise, color the edge blue. See \cref{fig:T713}.
\par 
\begin{figure}[h]
   \SequenceTree{7}{1}{3}
   \caption{The defended-prefix tree $T(7,1,3)$. Here $n_1=3, n_2=2,n_3=2$.}
   \label{fig:T713}
\end{figure}
\par 
Give every red edge a weight of $1/2$ and every blue 
edge a weight of $1/4$. Given a leaf 
$\ell$ in the leaf set $\mathcal{L}=\mathcal{L}(n,a,L)$ of $T(n,a,L)$, 
let $R(\ell)$ be the number of 
red edges in the unique path from the root of $T(n,a,L)$ 
to $\ell$ and $B(\ell)$ be the number of blue edges in this path. Every vertex has at most one red child and at most two blue children. Hence the total weight of its outgoing edges is at most $1/2 + 2\cdot 1/4= 1$, so \Cref{lem:weighted-kraft} applies. Thus for $\rho(\ell) \coloneqq 2^{-R(\ell)}4^{-B(\ell)}$, we have that the total leaf weight is at most $1$. That is
\begin{equation}\label{eqn:weighting}
    \sum_{\ell \in \mathcal{L}} \rho(\ell) \leq 1. 
\end{equation}

We will now use the above inequality to prove the following 
recursive upper bound, which 
will immediately imply \Cref{thm:main}.
\begin{lemma}\label{lemma:recursive}
    There exists a function $M(n)= e^{o(n)}$ such that for every integer $n\ge 1$ and every $0\le a\le n$ we have 
    \begin{equation*}
         h(n,a)\leq M(n)8^{n}2^{-a}\,.
    \end{equation*}
\end{lemma}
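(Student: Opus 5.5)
We argue by strong induction on $n$. Small $n$ (say $n\le n_0$) are absorbed into $M(n)$, since $h(n,a)\le n!$. For the inductive step, by symmetry under reflection it suffices to bound $g(S_n^-[a])$, the left truncation, with $0\le a\le n$. By \cref{property2} the left truncation first eats block $B_1$. If $a\ge n_1$, then $B_1$ is gone and a remainder $a-n_1$ eats into $B_2$. In that case at most two blocks remain, and \Cref{lem:partition-bound} with $s=2$ together with induction gives
\begin{equation*}
h(n,a)\le 2^{n-a}M(n_2)M(n_3)8^{n_2+n_3}2^{-(a-n_1)},
\end{equation*}
which is far below $8^n2^{-a}$ because $2^{n}\cdot 8^{-n_1}2^{n_1}$ is exponentially small. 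So the real case is $0\le a<n_1$, where all three blocks are present and $B_1$ is left-truncated by $a$.

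In that case we split the peeling sequences $\pi$ of $S_n^-[a]$ according to the leading ray $L\in[3]$ and the leaf $\ell\in\mathcal L(n,a,L)$ of the defended-prefix tree $T(n,a,L)$ given by the prefix of $\tilde\pi$ up to stopping time. By \cref{property1}, the prefix of $\pi$ is determined by $\ell$. After stopping time the remaining point set is the union of the two surviving blocks. Each surviving block $B_i$ is a one-sided truncation of a copy of $S_{n_i}$ by $x_i(\ell)$ points, where for $B_1$ this count includes the initial $a$. Applying \Cref{lem:partition-bound} with $s=2$ and the induction hypothesis,
\begin{equation*}
\#\{\pi:\ \ell\}\le 2^{m(\ell)}\prod_{i\ne L}M(n_i)8^{n_i}2^{-x_i(\ell)},
\end{equation*}
where $m(\ell)=\sum_{i\ne L}(n_i-x_i(\ell))$ is the number of points remaining. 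We need $\#\{\pi:\ell\}\le C\,8^{n-a}\,\rho(\ell)\cdot(\text{subexponential})$, after which summing over $\ell$ via \eqref{eqn:weighting} and over the three values of $L$ finishes the proof.

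The key calculation is to check this exponent. On the root-to-$\ell$ path there are $R=n_L$ red edges, and the blue edges number $B=\sum_{i\ne L}x_i(\ell)-a$, since the root already accounts for $a$. So we need
\begin{equation*}
2^{m}\,8^{n-n_L}\,2^{-\sum_{i\neq L}x_i}\lesssim 8^{n}2^{-a}\,2^{-n_L}4^{-B}.
\end{equation*}
Write $X=\sum_{i\ne L}x_i$. The left side is $2^{3n-3n_L+(n-n_L-X)-X}=2^{4n-4n_L-2X}$. The right side is $2^{3n-a-n_L-2X+2a}=2^{3n-n_L-2X+a}$. The inequality thus reduces to $n\le 3n_L+a$, which holds up to $O(1)$ since $n_L\ge\floor{n/3}$.

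If $L=1$, the root's $a$ letters should instead be counted as red, since they are deletions from $L$. Then $R=n_1-a$ and $B=X$, and the same algebra again gives $n\le 3n_1+O(1)$ with room to spare. The $O(1)$ slack together with the factor $3$ from summing over $L$ is absorbed into $M$. We define
\begin{equation*}
M(n)=C\max_{i}M(n_i)^2,
\end{equation*}
taking the maximum over the at most two block sizes; this recursion has depth $\log_3 n$, which gives $M(n)=n^{O(1)}=e^{o(n)}$.

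The main obstacle is making sure the multinomial/partition bound with base $2$ is what is used for the two surviving blocks; using the cruder base $3$ would break the exponent. One must also handle the boundary effects carefully: the $a\ge n_1$ regime, the cap structure of the tree near leaves, and the parity of $n\bmod 3$. I expect the exponent bookkeeping above to be the crux, and I would verify it first in each of the cases $L=1,2,3$.
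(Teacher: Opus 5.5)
Your proposal is essentially the paper's proof. It uses the same induction on $h$, the same split by leading ray $L$ and leaf of $T(n,a,L)$ with the $2^{-R}4^{-B}$ Kraft weighting, and the same base-$2$ partition bound on the two surviving blocks. Your recursion $M(n)=C\max_i M(n_i)^2$ with $C=12$ is exactly what the paper's closed form $M(n)=12^{2^{\lceil\log_3 n\rceil+1}-1}$ satisfies.

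Two side claims are wrong, but neither breaks the argument:
\begin{enumerate}
\item For $a\ge n_1$, the ratio of your bound to $8^n2^{-a}$ is $2^{n-2n_1-a}\le 2^{n-3n_1}\le 4$. You dropped the $2^{-a}$, and the quantity $2^{n-2n_1}$ you cite is exponentially large, not small. The ratio is therefore only bounded, not exponentially small, and it is absorbed into $C$, as in the paper.
\item The squaring recursion gives $M(n)=e^{\Theta(n^{\log_3 2})}$, not $n^{O(1)}$. This is still $e^{o(n)}$, which is all the lemma needs.
\end{enumerate}
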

\begin{proof}
    Let $M(n) \coloneqq 
    12^{2^{\ceil{\log_3(n)}+1}-1} = e^{O(n^{\log_3 2})} = e^{o(n)}$. We will prove the required inequality by induction on $n$. For $n\leq 3$, every one-sided truncation contains at most three points, so its peeling number is at most $6$. Suppose now that $n\ge 4$ and assume that the required inequality holds for all $n'<n$. We shall prove, for all $0\le a\le n$, that 
    \begin{equation*}
        g(S_n^-[a]) \le M(n)8^n2^{-a}\,.
    \end{equation*}

    We first prove a property of the chosen function $M$ which we will use repeatedly. Let $k = \ceil{\log _3 n}$. Notice that for all $i\in [3]$, we have $n_i\le \ceil{n/3}$. Thus $\ceil{\log_3 n_i} \le k-1$ and $M(n_i) \le 12^{2^k-1}$. 
    
    This implies that for $1\le i\ne j \le 3$, one has  
    \begin{equation} \label{eq:M(n)_property}
        M(n_i)M(n_j) \le 12^{2^{k+1}-2} = \frac {M(n)}{12}.
    \end{equation}
    
    We first consider the case $a<n_1$, that is, when $S_n^-[a]$ still has three active rays. For a fixed leading ray $L$, consider the defended-prefix tree $T(n,a,L)$. The leaves $\ell\in\mathcal{L}(n,a,L)$ encode the simplified prefixes ending at the stopping time of peeling sequences that begin with the prescribed left $a$-truncation of $S_n$ and whose leading ray is $L$. By \cref{property1}, for each $i\in [3]\setminus \{L\}$, the portion of $B_i$ surviving at this stopping time is an $x_i(\ell)$-truncation of $S_{n_i}$. Hence
   
    \begin{equation}\label{eq: stopping_time_structure}
        g(S_n^-[a]) \le \sum_{L \in [3]} \sum_{\ell \in \mathcal{L}(n,a,L)} 2^{n-|\ell|} \prod_{i \in [3] \setminus \{L\}}h(n_i,x_i(\ell))\le \sum_{L \in [3]} \sum_{\ell \in \mathcal{L}(n,a,L)} 2^{n-|\ell|} \prod_{i\in [3]\setminus \{L\}} M(n_i)8^{n_i}2^{-x_i(\ell)},
    \end{equation}
    where the first inequality is given by \Cref{lem:partition-bound} and the second inequality is given by the induction hypothesis, since $n_i<n$ for all $i\in [3]$.

    For every $L\in [3]$ and $\ell \in \mathcal{L}(n,a,L)$, let 
    \begin{equation*}
        Q(\ell) \coloneqq 2^{n-|\ell|} \prod_{i\in [3]\setminus \{L\}} M(n_i)8^{n_i}2^{-x_i(\ell)}.
    \end{equation*}
    We next obtain a uniform upper bound for $Q(\ell)/\rho(\ell)$. Combined with \eqref{eqn:weighting} and \eqref{eq: stopping_time_structure}, this will give the desired estimate. For $\ell\in\mathcal L(n,a,L)$, set $x(\ell)\coloneqq \sum_{i\in[3]\setminus\{L\}} x_i(\ell)$.

    Note that 
    \begin{equation*}
        R(\ell) + B(\ell) = |\ell|-a,\qquad B(\ell)\le x(\ell).
    \end{equation*}
    Indeed, $R(\ell) + B(\ell)$ counts the number of point deletions after the initial $a$-truncation of $S_n$ up to the stopping time, at which point a total of $|\ell|$ points of $S_n$ have been deleted. Furthermore, 
    $B(\ell)$ counts the number of point deletions from blocks not in the leading ray after the initial $a$-truncation of $S_n$ up to the stopping time, while $x(\ell)$ counts the number of point deletions from blocks not in the leading ray up to the 
    stopping time. Thus, since $n-3n_L \le 2$, by \eqref{eq:M(n)_property} 
    \begin{equation}\label{eq: uniform_bounding}
         \frac{Q(\ell)}{\rho(\ell)}= \bigg(\prod_{i\ne L} M(n_i) \bigg)2^{|\ell|-a+B(\ell) + n - |\ell| +3(n-n_L) - x(\ell)}
         \le \frac{M(n)}{12}2^{3n_L+2 + 3(n-n_L)-a} = \frac{M(n)}{3}8^n2^{-a}. 
    \end{equation}
    Finally, combining \eqref{eq: stopping_time_structure}, \eqref{eq: uniform_bounding}, and \eqref{eqn:weighting} gives us
    \begin{equation*}
        g(S_n^-[a]) \le \sum_{L \in [3]} \sum_{\ell \in \mathcal{L}(n,a,L)} \rho(\ell) \frac{Q(\ell)}{\rho(\ell)} \le \sum_{L \in [3]} \frac{M(n)}{3}8^n2^{-a}\sum_{\ell \in \mathcal{L}(n,a,L)}\rho(\ell) \le  M(n)8^n2^{-a}.
    \end{equation*}
    Suppose now that $a \ge n_1$. By \cref{property2}, if $x_2 = \min\{n_2,a-n_1\}$ and $x_3 = a-n_1-x_2$ then $x_2$ and $x_3$ are respectively the number of points removed from $B_2$ and $B_3$ of an
    $a$-truncation of $S_n$. In particular, $x_2+x_3 = a-n_1 \ge 0$. Combining this inequality with \Cref{lem:partition-bound}, $n-3n_1\le 2$, \eqref{eq:M(n)_property}, and the 
    induction hypothesis
    gives 
    \begin{equation*}
        g(S_n^-[a]) \le 2^{n-a} M(n_2)8^{n_2}2^{-x_2}M(n_3)8^{n_3}2^{-x_3} \le \frac{M(n)}{12} 2^{3n +2-a} \le M(n)8^n2^{-a}.
    \end{equation*}

For right truncations the same argument applies with $B_1$ and $B_3$ interchanged. Indeed, the estimate $n-3n_i\le 2$ is uniform in $i$. We omit the details.
\end{proof}

\Cref{thm:main} now follows as a corollary of the above lemma.
\begin{proof}[Proof of \Cref{thm:main}]
 Notice that for $a=0$, an $a$-truncation of $S_n$ is $S_n$ itself. Thus, by \Cref{lemma:recursive}, there exists a subexponential function $M(n)$ such that
 \begin{equation*}
     g(n) \le g(S_n) = h(n,0) \le M(n)8^n = (8+o(1))^n.\qedhere
 \end{equation*} 
\end{proof}

\section{Concluding remarks}
For $P \subseteq \mathbb{R}^d$ in general position, 
let $g_d(P)$ be the number of peeling sequences of $P$ 
and let $g_d(n)$ be defined analogously. 
Dumitrescu and T\'oth in \cite{dumitrescuToth2025peeling} 
initiated the study of $g_d(P)$ for $d\geq 3$ 
with improved bounds found by Simon in \cite{simon2025minimum}. 
The same general framework developed here works to obtain upper bounds for $g_d(n)$ for $d\geq 3$. We very briefly describe how this strategy can be extended to higher-dimensional settings without giving full details for the sake of brevity. 
\par 
We use Simon's 
higher-dimensional recursive 
construction in \cite{simon2025minimum} on $n$ points, which 
we denote by $S_{n,d}$. 
Here we use Simon's notion 
of the \emph{defense of a point}. 
A point $p$ is said to be 
\emph{$m$-defended} by $S$ if $p$ is not peelable in 
the first $m$ steps of a peeling process on $S\cup \{p\}$. 
As in the planar case, we consider one-sided truncations 
of the segment-like copies of $S_{n,d}$. 
Instead of three rays in the planar case we consider 
$D = d+2m-1$ rays from the origin with 
copies of $S_{n_i,d}$ placed on the rays with  
$n_i \in \{\floor{n/D}, \ceil{n/D}\}$. The rays and placement 
of the copies of $S_{n_i,d}$ are arranged so that the origin is 
$m$-defended and the copies are horizontally separated. In particular, the higher-dimensional analog 
of \cref{property1} is maintained until $m$ blocks 
are exhausted. 
Given a peeling sequence $\pi$ of $S_{n,d}$, 
the \emph{stopping time} of $\pi$ is now the first 
time in which $m$ rays are no longer active. At a stopping 
time of $\pi$, we let $\Lambda$ denote the \emph{exhaustion ledger}, 
which records the order in which the blocks have been exhausted  
before the stopping time, which blocks have survived, 
and which new block is exposed after each block is exhausted. 
If we fix an $a$-truncation and an exhaustion ledger $\Lambda$,  
we can construct the higher-dimensional analog of the 
defended-prefix tree $T(n,a,\Lambda)$. 
The remaining computations are similar to the planar 
case and we omit the details. 
\subsection*{Acknowledgments and AI disclosure}
We thank Alexander Polyanskii for his helpful comments about the presentation of this paper. We used ChatGPT, running GPT-5.6 Sol (OpenAI), to assist with the development of the proof contained in this manuscript, with the preparation of the TikZ figures, and with writing the code used to generate the defended-prefix tree displayed in the manuscript. The final manuscript was written by the authors and we take full responsibility for all mathematical claims and for the final contents of the manuscript.
\bibliographystyle{plain}
\bibliography{bib}   

\end{document}